\newcommand{\Q}{\mathbb Q}
\newcommand{\Z}{\mathbb Z}
\newcommand{\F}{\mathbb F}
\providecommand{\abs}[1]{\lvert#1\rvert}
\title{THE NONEXISTENCE OF CERTAIN REPRESENTATIONS OF THE ABSOLUTE GALOIS GROUP OF QUADRATIC FIELDS}  
\author{Mehmet Haluk \c{S}eng\"{u}n}
\date{}
\begin{document}
\maketitle
\abstract{For a quadratic field $K$, we investigate continuous mod $p$ representations of $Gal(\overline{K}/K)$ that are unramified
away from $\{ p,\infty \}$. We prove that for certain $(K,p)$, there are no such irreducible representations. We also list some imaginary quadratic 
fields for which such irreducible representations exist. As an application, we  look at elliptic curves with good reduction away from $2$ over quadratic 
fields.}

\footnotetext[1]{{\it 2000 Mathematics Subject Classification}. Primary 11F80, 11R39; Secondary 11G05}
\footnotetext[2]{{\it Key words and phrases.} Galois representations, quadratic number fields, elliptic curves }
\section{Introduction and Main Results}

    A famous conjecture of Serre [16] relates in a precise way the irreducible continuous odd representations of $Gal(\overline{\Q}/ \Q)$ into $GL_2(\overline{\F}_p)$
   to cuspidal modular forms over $\Q$. In particular, the conjecture implies that for $p<11$, such a representation does not exist if it is ramified
   only at $p$. 
    On the other hand, again as predicted by the conjecture, such a representation that is ramified only at $11$ does exist for $p=11$ due to the 
    elliptic curve of conductor 11.
    In support of the conjecture, Tate [18] and Serre [15] showed the nonexistence for $p=2$ and $p=3$ respectively.
    Partial nonexistence results have been proved by Brueggeman [1] for $p=5$ and by Moon and Taguchi [10] for $p=7$. Recently, the "odd level" case of
   the conjecture has been proved by Khare et al. Thus the nonexistence has been proved for $p<11$.  
   
    Let $K$ be a number field and $p$ be a rational prime. We say that the pair $(K,p)$ satisfies $(\dag)$, if there is no irreducible
    continuous representation of $Gal(\overline{K}/K)$ into $GL_2(\overline{\F}_p)$ that is unramified away from $(p, \infty )$. 
    There have been studies to formulate an analogue of Serre's conjecture over totally real fields [3] and over imaginary quadratic fields [5]. 
    A natural problem related to these studies is to know which pairs $(K,p)$ satisfy $(\dag)$ and which pairs do not.

   Let $\rho : Gal(\overline{K}/K) \rightarrow GL_2(\overline{\F}_p)$ be continuous and unramified away from $\{ p, \infty \}$. 
   Then the field $L$ corresponding to $Ker(\rho)$ is a finite extension of $K$ unramified away from $\{ p, \infty \}$ and we  
   get an embedding of $Gal(L/K)$ into some $GL_2(\F_{p^a})$. In this paper we investigate the case where $K$ is quadratic and $p=2,3$.
   
   In Section 2, we look at the case where $p=2$ and the extension $L/K$ is nonsolvable. Let $d_{K/ \Q}$ denote the discriminant of $K$ over $\Q$.

    \newtheorem*{nonsolvable}{Theorem A}
    \begin{nonsolvable}
    Let $K=\Q(\sqrt{d})$ be a quadratic field and let $L/K$ be a nonsolvable Galois extension
    unramified over every odd prime whose Galois group embeds into some $GL_2(\F_{2^a})$. 
    If $d=6,5,3,2,-1,-2,-3,-5,-6$ then no such $L$ exists.
    
    \end{nonsolvable}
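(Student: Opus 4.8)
The plan is to argue by contradiction: from such an $L$ I would extract a number field whose root discriminant is too small for its degree, extending Tate's argument for $K=\Q$.

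First, the group theory. Since $G=\mathrm{Gal}(L/K)\hookrightarrow GL_2(\F_{2^a})$ is nonsolvable, its image in $PGL_2(\F_{2^a})=PSL_2(\F_{2^a})$ is nonsolvable (the kernel being scalar), so by Dickson's classification of finite subgroups of $PSL_2$ in characteristic $2$ it contains a copy of $PSL_2(\F_{2^b})$ for some $b\ge 2$; in particular $A_5=PSL_2(\F_4)$ is a section of $G$, so $|G|\ge 60$ and $[L:\Q]=2|G|\ge 120$. I would also record that the decomposition group of $L/K$ at a prime above $2$ is solvable, being a quotient of the prosolvable group $\mathrm{Gal}(\overline{K_\mathfrak p}/K_\mathfrak p)$, so that $G$ is generated by conjugates of such decomposition groups together with the complex conjugations — morally, all the nonsolvability has to come out of the ramification above $2$.

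Second, the root-discriminant bound. As $L/K$ is unramified outside $\{2,\infty\}$, the relative different is supported above $2$. The crucial local point is that the wild inertia group $P_\mathfrak P$ at $\mathfrak P\mid 2$ is a $2$-subgroup of $GL_2(\F_{2^a})$, hence conjugate into the unipotent subgroup and therefore elementary abelian. Running through the tower $L_\mathfrak P\supseteq L_\mathfrak P^{P_\mathfrak P}\supseteq K_\mathfrak p$ — the bottom layer tame, the top layer a compositum of quadratic extensions each of conductor exponent at most $2e(L_\mathfrak P^{P_\mathfrak P}/\Q_2)+1$, and, by the conductor filtration, at most half of these attaining the maximum — the conductor–discriminant formula yields $v_\mathfrak P(\mathfrak d_{L/K})/e(\mathfrak P/2)<2$, hence $\delta_L<4\,\delta_K$, an explicit constant independent of $a$, $b$ and $[L:\Q]$. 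For $K$ imaginary $L$ is totally imaginary, and for $K$ real the case that resists below is again $L$ totally imaginary. Combining with the unconditional Odlyzko–Serre–Poitou lower bound in degree $\ge 120$ (resp., when $b\ge 3$, in degree $2|PSL_2(\F_{2^b})|$) gives a contradiction for every $d$ with $4\delta_K$ comfortably below the relevant bound — which handles $d\in\{-1,-2,-3,2,3,5\}$ outright, and the remaining $d$ whenever $b\ge 3$.

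The hard part, and the main obstacle, is the residual case $b=2$ with $d=\pm5,\pm6$, i.e. a potential $A_5$-extension of $\Q(\sqrt{\pm5})$ or $\Q(\sqrt{\pm6})$ unramified away from $\{2,\infty\}$, where $4\delta_K$ sits essentially on top of the degree-$120$ Odlyzko bound. There I would either sharpen the computation at $2$ using the explicit structure of the relevant $2$-adic fields $K_\mathfrak p$ and of the admissible inertia image $A_4\hookrightarrow GL_2(\F_4)$ — pinning down which wildly ramified biquadratic extensions of $K_\mathfrak p$ actually occur and with which ramification jump — so as to push $\delta_L$ strictly below the bound; or I would pass to the quintic subfield $F=L^{A_4}$ cut out by the $A_5$-action on five points: then $F/\Q$ has degree $10$, Galois closure with group $A_5$ or $S_5$, and is unramified outside $\{2,3,\infty\}$ (resp. $\{2,5,\infty\}$), and one rules out such an $F$ by a Hunter-type search or by the known classification of number fields with that restricted ramification. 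Closing these borderline $A_5$-cases honestly is the crux; the group-theoretic reduction and the discriminant inequality above are essentially formal.
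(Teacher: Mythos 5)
Your overall strategy -- bound the root discriminant of $L$ from above using the restricted ramification at $2$ and from below by Odlyzko--Poitou, with $n=[L:K]\geq 60$ coming from nonsolvability -- is exactly the paper's, but the quantitative heart of the argument has a genuine gap. Your claimed local bound $\delta_L<4\delta_K$ (i.e.\ $c<2$ in the normalization $v(2)=1$) is neither justified nor sufficient. It is not justified: in the conductor--discriminant count the characters of submaximal conductor form a \emph{proper subgroup} of the dual of the elementary abelian wild inertia, so at \emph{least} half the characters attain the maximal conductor, not ``at most half'' as you assert; and indeed the sharp Moon--Taguchi bound over a ramified quadratic $K_{\mathfrak p}/\Q_2$ is $c\leq \frac94-\frac{1}{2^{m-1}}$, which exceeds $2$ as soon as the wild ramification index is $2^m\geq 8$, so no argument of this shape can deliver a flat $c<2$. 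It is not sufficient: even granting $\delta_L<4\delta_K$, for $|d_{K/\Q}|=20$ or $24$ the Poitou lower bound at degrees $2n=120,240,360$ does not exceed $4\delta_K$, so no contradiction results -- and you concede this, leaving $d=\pm5,\pm6$ open with two strategies (a sharper local analysis, or a Hunter-type search for a degree-$10$ subfield) that are only sketched, not executed. Since four of the nine fields in the statement are among these, the proof is incomplete. (A side point: $2$ is \emph{inert} in $\Q(\sqrt5)$ and $|d_{K/\Q}|=5$ there, so $d=5$ is not actually a borderline case; the genuinely hard ones are $d=-5,\pm6$.)

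What closes these cases in the paper is precisely the refinement you would need in your first proposed strategy, and it is not a fixed constant but an $n$-dependent one. Moon's bound (inert case) gives $c\leq 3-\frac{1}{2^{m-1}}-\frac{1}{e2^m}$ and Moon--Taguchi's (ramified case) gives $c\leq \frac94-\frac{1}{2^{m-1}}$, and Tate's observation that $2^m$ divides $n$ while $n$ is divisible by at least three distinct primes yields $\frac{n}{2^{m-1}}\geq 30$ (similarly $\frac{n}{e2^m}\geq 3$ from the solvability of inertia). Substituting turns the correction term into $-\frac{30}{n}$, so the upper bound on $\log\delta_L$ \emph{decreases} relative to the Poitou lower bound as $n$ grows in exactly the range $60\leq n\lesssim 375$ where the flat bound fails; an elementary minimization then gives a contradiction for all $n\geq 60$ whenever $|d_{K/\Q}|<24.9$ (ramified) or $|d_{K/\Q}|<7.477$ (inert), which covers all nine fields. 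You should incorporate the $-\frac{1}{2^{m-1}}$ term and the divisibility constraint $2^m\mid n$ into your discriminant bound; without them the borderline fields cannot be reached by this method.
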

    
   Brueggeman [2] proved Theorem A for $d=-2,-1,2$. 
   In Section 3, we treat the case where $p=2$ and $L/K$ is solvable for the fields reported in Theorem A.

   \newtheorem*{solvable}{Theorem B}
    \begin{solvable}
    Let $K=\Q(\sqrt{d})$ be a quadratic field and let $L/K$ be a solvable Galois extension
    unramified over every odd prime. Assume that there is an embedding $\rho:Gal(L/K) \hookrightarrow GL_2(\F_{2^a})$ for some $a$. 
    If $d=6,5,3,2,-1,-2,-3,-5,-6$ then the embedding $\rho$ is reducible.
    \end{solvable}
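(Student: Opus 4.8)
The plan is to assume $\rho$ is irreducible and reach a contradiction in two stages: first, force $\rho$ to be \emph{dihedral}, i.e.\ induced from a character of a quadratic extension $M/K$ with tightly controlled ramification; then rule out that character by a ray class group computation over the finitely many possible $M$. Throughout I regard $\rho$ as a representation of $Gal(\overline K/K)$ via inflation, unramified away from $\{2,\infty\}$, with solvable image $G:=\rho(Gal(L/K))\subseteq GL_2(\F_{2^a})$.

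\emph{Stage 1 (dihedral reduction).} Assuming $\rho$ acts irreducibly on $\overline{\F}_2^{\,2}$, Dickson's classification of the finite subgroups of $GL_2(\overline{\F}_2)$ shows that an irreducible solvable such subgroup is conjugate into the normalizer of a maximal torus but not into the torus itself. Indeed, among the possibilities for the projective image $\bar G$: a nontrivial normal $2$-subgroup would leave $\rho$ reducible; the groups containing $PSL_2(\F_{2^b})$ with $b\ge2$ are not solvable; $A_4$ has no $2$-dimensional irreducible $\overline{\F}_2$-representation and the $2$-dimensional irreducible of $S_4$ factors through $S_4/V_4\cong S_3$, so neither admits a faithful irreducible representation into $GL_2(\overline{\F}_2)$; and a cyclic $\bar G$ would make $G$ abelian, again forcing $\rho$ reducible. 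Hence $\bar G$ is dihedral, containing a cyclic normal subgroup $\bar C$ of index $2$; its preimage $C\le G$ is abelian, and the subextension $M/K$ with $Gal(\overline K/M)=\rho^{-1}(C)$ is quadratic. Since $\rho$ restricted to $Gal(\overline K/M)$ has abelian image it is reducible; letting $\psi\colon Gal(\overline K/M)\to\overline{\F}_2^{\times}$ be the character on an invariant line, Frobenius reciprocity gives $\rho\cong\mathrm{Ind}_{Gal(\overline K/M)}^{Gal(\overline K/K)}\psi$, and irreducibility of $\rho$ forces $\psi\ne\psi^{\sigma}$, where $\sigma$ generates $Gal(M/K)$; in particular $\psi\ne1$. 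Because $\overline{\F}_2^{\times}$ has odd order, $\psi$ has odd order and is therefore unramified at every archimedean place; and from the conductor–discriminant formula $\mathrm{cond}(\rho)=\mathfrak d_{M/K}\cdot N_{M/K}(\mathfrak f_{\psi})$ together with the hypothesis on $\rho$ one gets that $M/K$ is unramified away from $\{2,\infty\}$ and that $\mathfrak f_{\psi}$ is supported on the primes of $M$ above $2$.

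\emph{Stage 2 (eliminating $\psi$).} It now suffices to prove: for each of the nine fields $K$, no quadratic extension $M/K$ unramified away from $\{2,\infty\}$ carries a nontrivial character $Gal(\overline K/M)\to\overline{\F}_2^{\times}$ unramified away from $\{2,\infty\}$; equivalently, for every such $M$ the prime-to-$2$ part of $\varprojlim_n \mathrm{Cl}_{\mathfrak m_n}(M)$ vanishes, where $\mathfrak m_n=\prod_{\mathfrak p\mid2}\mathfrak p^{\,n}$. First one enumerates the candidate $M$: quadratic extensions of $K$ unramified outside $S:=\{\mathfrak p\mid2\}\cup\{\text{archimedean places}\}$ correspond to the nonzero classes of the finite $\F_2$-space $K(S,2)=\{x\in K^{\times}/(K^{\times})^{2}:v(x)\equiv0\ (\mathrm{mod}\ 2)\text{ for all }v\notin S\}$, whose dimension is controlled by $\#S$, the $2$-rank of $\mathrm{Cl}(K)$, and the unit rank; for the nine base fields this yields a short explicit list of quartic fields $M/\Q$, all unramified away from $\{2,\infty\}$. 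For each such $M$ one invokes the exact sequence
\[
1\longrightarrow\Bigl({\textstyle\prod_{\mathfrak p\mid2}}(\mathcal O_M/\mathfrak p^{\,n})^{\times}\Bigr)\big/\,\overline{\mathcal O_M^{\times}}\longrightarrow \mathrm{Cl}_{\mathfrak m_n}(M)\longrightarrow \mathrm{Cl}(M)\longrightarrow1,
\]
in which $(1+\mathfrak p)/(1+\mathfrak p^{\,n})$ is a $2$-group; hence the prime-to-$2$ part of the projective limit is a subquotient of $\mathrm{Cl}(M)\times\prod_{\mathfrak p\mid2}\F_{N\mathfrak p}^{\times}$, and it vanishes once (a) $h_M$ is a power of $2$ and (b) the global units of $M$ surject onto the odd part of $\prod_{\mathfrak p\mid2}\F_{N\mathfrak p}^{\times}$. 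Both (a) and (b) are finite checks, performed field by field, and for each of the listed values of $d$ they all succeed.

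The step I expect to be the main obstacle is (b), in the cases where $2$ is inert in $M$ (so that $N\mathfrak p-1=15$) or where $2=\mathfrak p^{2}$ has residue degree $2$ (so that $N\mathfrak p-1=3$): there one must produce explicit global units of $M$ whose reductions modulo the primes above $2$ generate the required cyclic groups, and one must independently verify that the class numbers of these specific quartic fields have trivial odd part. Once every candidate pair $(M,\psi)$ is eliminated, the irreducible case is impossible, so $\rho$ is reducible.
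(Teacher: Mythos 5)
Your strategy is genuinely different from the paper's, and it is viable. The paper never invokes Dickson's classification or induction from a quadratic extension: it proves the much stronger statement that $G=Gal(L/K)$ is a $2$-group, by showing that $G/G'$ and $G'/G''$ are $2$-groups (whence $G/G''$ is a quotient of the elementary abelian Sylow $2$-subgroup $S$, so it is abelian, $G'=G''$, and solvability kills $G'$). The verifications there are that the ray class groups of $K$ with modulus $\mathfrak{p}^k\mathfrak{m}_\infty$ are $2$-groups, and that the analogous ray class groups of all subfields of the maximal elementary $2$-abelian extension $A$ of $K$ unramified away from $\{2,\infty\}$ --- a field of degree up to $16$ over $\Q$ --- have trivial odd part. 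Your dihedral reduction replaces $A$ by the handful of quartic fields $M$ (which are exactly the quadratic subextensions of $A/K$), so the computational load is lighter; the price is that you conclude only reducibility, whereas the paper's conclusion that the image is a $2$-group is what gets used later in the elliptic-curve application.

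Two points need attention. First, a genuine logical gap: you assume $\rho$ acts irreducibly on $\overline{\F}_2^{\,2}$, so what you prove is absolute reducibility. This misses the case where $\rho$ is irreducible over $\F_{2^a}$ but absolutely reducible, i.e.\ the image lies in a nonsplit torus: then $G$ is cyclic of odd order $>1$ and $L/K$ is a cyclic extension of odd degree unramified away from $\{2,\infty\}$. Your Stage 1 dismisses cyclic $\bar G$ with ``abelian, hence reducible,'' which is only true over $\overline{\F}_2$. The fix is cheap --- run your Stage 2 computation with $M=K$ as well, which is exactly the paper's Proposition 2 --- but as written this case is not covered, and it is precisely the case that matters downstream ($Gal(K(E[2])/K)\cong\Z/3\subset GL_2(\F_2)$ is irreducible over $\F_2$). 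Second, the decisive content of your argument is the field-by-field verification of (a) and (b) for the candidate quartic fields (and, with the above fix, for $K$ itself), which you assert rather than perform; you rightly single out the residue-degree-$4$ case (odd part $15$, so a possible $5$-part when $2$ is inert in $K$) and the residue-degree-$2$ case (odd part $3$) as the ones requiring explicit units and class numbers. Until those checks are actually run, this is a sound strategy rather than a complete proof; the paper's Table 1 (all relevant class numbers equal $1$, all relevant $3$-ranks zero) makes it very plausible that your checks would succeed.
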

    
   Putting these two theorems together, we get the following result. 

   \newtheorem*{main}{Corollary}
   \begin{main}
   For $d=6,5,3,2,-1,-2,-3,-5,-6$, the pair $(\Q(\sqrt{d}),2)$ satisfies $(\dag)$.
   \end{main}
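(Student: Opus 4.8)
The plan is to obtain the Corollary as an immediate formal consequence of Theorems A and B; the argument is the reduction already sketched in the introduction, organized as a short proof by contradiction. Fix one of the listed values $d$ and put $K=\Q(\sqrt d)$. Suppose $(\Q(\sqrt d),2)$ does \emph{not} satisfy $(\dag)$, so there is an irreducible continuous representation $\rho:Gal(\overline K/K)\to GL_2(\overline{\F}_2)$ that is unramified away from $\{2,\infty\}$.

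First I would observe that $\rho$ has finite image: since $\overline{\F}_2=\bigcup_a\F_{2^a}$ carries the discrete topology, $GL_2(\overline{\F}_2)$ is a discrete group, so the continuous image of the compact group $Gal(\overline K/K)$ is finite. Hence $\rho$ factors through $Gal(L/K)$, where $L$ is the fixed field of $\ker\rho$, and the image lies in $GL_2(\F_{2^a})$ for a suitable $a$ (take $\F_{2^a}$ to contain the finitely many matrix entries occurring in the image). This yields an embedding $\bar\rho:Gal(L/K)\hookrightarrow GL_2(\F_{2^a})$. The extension $L/K$ is finite and Galois, and it is unramified at every odd prime, since for $p=2$ the condition ``unramified away from $\{2,\infty\}$'' means exactly ``unramified at every finite place over an odd prime.'' Finally, $\bar\rho$ has the same image as $\rho$, hence $\bar\rho$ is irreducible as a $2$-dimensional representation precisely because $\rho$ is.

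Now I would split into two cases according to whether $Gal(L/K)$ is solvable. If $Gal(L/K)$ is nonsolvable, then $L/K$ is a nonsolvable Galois extension unramified over every odd prime whose Galois group embeds into $GL_2(\F_{2^a})$; Theorem A asserts that for $d\in\{6,5,3,2,-1,-2,-3,-5,-6\}$ no such $L$ exists, a contradiction. If $Gal(L/K)$ is solvable, then the hypotheses of Theorem B are satisfied, so Theorem B forces $\bar\rho$ to be reducible; this contradicts the irreducibility of $\bar\rho$ established above. In either case we reach a contradiction, so the assumed $\rho$ cannot exist, i.e.\ $(\Q(\sqrt d),2)$ satisfies $(\dag)$.

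The deduction has essentially no obstacle of its own — all the substance lives in Theorems A and B. The only two points that deserve a line of care are the passage to the finite quotient $Gal(L/K)$ (finiteness of the image and descent of its matrix entries to some $\F_{2^a}$) and the remark that reducibility depends only on the image, so that ``$\rho$ irreducible'' and ``$\bar\rho$ irreducible'' are equivalent; with those in place the two-case split closes the argument.
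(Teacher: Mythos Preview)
Your argument is correct and is exactly the deduction the paper intends: the introduction already notes that any continuous $\rho$ factors through a finite quotient $Gal(L/K)\hookrightarrow GL_2(\F_{2^a})$ with $L/K$ unramified away from $\{2,\infty\}$, and the Corollary is obtained by ``putting these two theorems together'' via the solvable/nonsolvable dichotomy you describe. The small care you take (finiteness of the image, descent to some $\F_{2^a}$, and that irreducibility depends only on the image) just makes explicit what the paper leaves implicit.
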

   
   In Section 4, we focus on $p=3$.
   \newtheorem*{main2}{Theorem C}
   \begin{main2}
   The pair $(\Q(\sqrt{-3}),3)$ satisfies $(\dag)$.
   \end{main2}

   We follow ideas of Tate [18] to prove the theorems.
   The proof of Theorem A is based on comparing upper and lower bounds of discriminants.
   Using a discriminant upper bound of Moon[9], one proves Theorem A
   for fields $d=5,3,2,-1,-2,-3$. To also get the fields $d=6,-5,-6$, we use part of a sharp upper bound calculation of Moon and Taguchi 
   who studied the same problem for $p=2$ in their preprint [11].   
   For Theorem B, we use class field theory and the computer algebra system MAGMA. In Section 4, we prove Theorem C by applying the 
   methods of the first two theorems to $p=3$.
   In Section 5, we present a list of imaginary quadratic fields $K$ such that $(K,2)$ does not satisfy $(\dag)$. In Section 6, we use Theorem B to
   show the nonexistence of elliptic curves with good reduction everywhere over certain quadratic fields. 
   
   \textit{Acknowledgements} \ 
   I am grateful to Nigel Boston for suggesting this problem and for his constant support throughout this project. It is a pleasure to thank 
   Yuichiro Taguchi for his important comments 
   on the preliminary version of this paper, especially on Section 2, and Seyfi T\"urkelli
   for the careful reading of the final version of this paper. Last but certainly not least, I sincerely thank 
   the referee for patiently examining the paper and for his/her very useful comments, and Ken Ono for his attention and support.

  \section{Nonsolvable Case, $p=2$}
  
   We start with the discriminant upper bound of Moon[9].
   \newtheorem*{M}{Lemma 1}
   \begin{M}Let $F$ be a finite extension of $\Q_p$ with ramification index $e$. Suppose $E/F$ is a finite extension with an elementary
   $p$-abelian Galois group of order $p^m$ where $m\geq 1$. Then the different $\mathcal{D}_{E/F}$ of $E/F$ divides $(p)^c$ where 
   \begin{displaymath}c \leq \biggl ( 1+\dfrac{\alpha}{e}\biggr )\biggl ( 1-\dfrac{1}{p^m}\biggr )\end{displaymath}
   and $\alpha=\bigl [ \frac{e}{p-1} \bigr ]+1$. (here $[x]$ denotes the maximal integer $\leq x$)
   \end{M}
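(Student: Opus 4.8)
The plan is to deduce the bound from two standard ingredients: the conductor--discriminant formula, which turns the different of the elementary abelian extension $E/F$ into a sum of conductors of its degree-$p$ subextensions, and the classical bound on the ramification break of a degree-$p$ extension of a $p$-adic field, which is where the constant $\alpha$ is manufactured. Throughout, write $G=\mathrm{Gal}(E/F)\cong(\Z/p\Z)^m$ and let $v_E$ be the normalized valuation of $E$.

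First I would translate the claim. Since $\mathcal{D}_{E/F}=\mathfrak p_E^{\,v_E(\mathcal{D}_{E/F})}$ and its norm is the discriminant $\mathfrak d_{E/F}=\mathfrak p_F^{\,f(E/F)\,v_E(\mathcal{D}_{E/F})}$, while $v_E(p)=e(E/\Q_p)=[E:F]\,e=p^m e$, the assertion $\mathcal{D}_{E/F}\mid (p)^c$ is equivalent to $v_F(\mathfrak d_{E/F})\le c\cdot p^m e$. Now apply the conductor--discriminant formula $\mathfrak d_{E/F}=\prod_{\chi}\mathfrak f(\chi)$, the product over all characters of $G$, where $\mathfrak f(\chi)=\mathfrak p_F^{f(\chi)}$ is the Artin conductor; thus $v_F(\mathfrak d_{E/F})=\sum_{\chi\neq 1}f(\chi)$, a sum over the $p^m-1$ nontrivial characters. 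Each such $\chi$ has order $p$, hence factors through $\mathrm{Gal}(E_\chi/F)\cong\Z/p\Z$ where $E_\chi=E^{\ker\chi}$ is a degree-$p$ subextension; if $E_\chi/F$ is unramified then $f(\chi)=0$, and if it is ramified it is totally ramified and $f(\chi)=t_\chi+1$, where $t_\chi$ is the unique ramification break of $E_\chi/F$ (the same in the upper and lower numbering, since $\mathrm{Gal}(E_\chi/F)$ has a single break). So everything reduces to the inequality $t_\chi\le e+\alpha-1$ for an arbitrary totally ramified degree-$p$ extension of $F$; granting it, $f(\chi)\le e+\alpha$ for every $\chi\neq 1$, whence
\[
v_F(\mathfrak d_{E/F})=\sum_{\chi\neq1}f(\chi)\ \le\ (p^m-1)(e+\alpha),
\]
and dividing by $p^m e$ gives $c\le\frac{(p^m-1)(e+\alpha)}{p^m e}=\bigl(1-\frac{1}{p^m}\bigr)\bigl(1+\frac{\alpha}{e}\bigr)$, which is the claim.

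It then remains to bound the break of a totally ramified degree-$p$ extension $E'/F$. By local class field theory the norm residue map $\theta\colon F^\times\to\mathrm{Gal}(E'/F)$ is surjective and carries the principal unit group $U_F^{(u)}=1+\mathfrak p_F^u$ onto the $u$-th upper ramification subgroup; since the target has exponent $p$ we have $(F^\times)^p\subseteq\ker\theta$, so it suffices to prove $U_F^{(e+\alpha)}\subseteq(F^\times)^p$, as this forces $\theta(U_F^{(e+\alpha)})=1$ and hence break $\le e+\alpha-1$. Here the precise value $\alpha=\lfloor e/(p-1)\rfloor+1$, the least integer strictly exceeding $e/(p-1)$, comes in: fixing a uniformizer $\pi$ of $F$ and expanding $(1+\pi^i a)^p=1+p\pi^i a+\sum_{j=2}^{p-1}\binom{p}{j}\pi^{ij}a^j+\pi^{pi}a^p$, one checks for $i\ge\alpha$ (using that $\binom{p}{j}$ has valuation $e$ for $0<j<p$, that $ij\ge i+1$ when $j\ge 2$, and that $i(p-1)>e$ is equivalent to $i\ge\alpha$ and forces $pi\ge i+e+1$) that every term beyond $p\pi^i a$ lies in $U_F^{(i+e+1)}$, while $p\pi^i a$ has exact valuation $i+e$ with unit residue $a\cdot\overline{(p\pi^{-e})}$. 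Hence the $p$-th power map induces an isomorphism $U_F^{(i)}/U_F^{(i+1)}\xrightarrow{\sim}U_F^{(i+e)}/U_F^{(i+e+1)}$ for all $i\ge\alpha$, and a successive-approximation argument in the complete group $U_F^{(\alpha)}$ yields $(U_F^{(\alpha)})^p=U_F^{(\alpha+e)}$; in particular $U_F^{(e+\alpha)}\subseteq(F^\times)^p$, completing the proof.

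The hard part is exactly this last computation: isolating the precise level $e+\alpha$ above which principal units become $p$-th powers requires the case analysis of the $p$-th power map on the graded pieces $U_F^{(i)}/U_F^{(i+1)}$ according to the sign of $i(p-1)-e$, and it is precisely the threshold $e/(p-1)$, rounded up, that produces the constant $\alpha$. Everything else --- the conductor--discriminant formula, the identification of the conductor of a ramified $\Z/p\Z$-extension with its break plus one, the handling of possible unramified characters, and the bookkeeping with $[E:F]=p^m$ --- is routine, and one should note along the way that the bound is sharp (e.g. for $F=\Q_2$, $m=1$, the extension $\Q_2(\sqrt 2)/\Q_2$ meets it with equality).
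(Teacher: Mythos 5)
The paper does not prove this statement: Lemma 1 is quoted verbatim from Moon's paper (reference [9]) and used as a black box, so there is no in-paper proof to compare against. Your argument is a correct, self-contained proof, and it is the natural one: reduce via the conductor--discriminant formula to bounding the conductor of each of the $p^m-1$ nontrivial order-$p$ characters, identify that conductor with (break $+\,1$) of the corresponding degree-$p$ subextension, and bound the break by $e+\alpha-1$ using the fact that $U_F^{(e+\alpha)}\subseteq (F^\times)^p$, which is exactly where the threshold $\alpha=\lfloor e/(p-1)\rfloor+1$ is produced by the $p$-th power map on the filtration of principal units. All the individual steps check out (conductor of an unramified character is $0$; upper and lower breaks coincide for a cyclic degree-$p$ extension; the graded computation of $(1+\pi^i a)^p$ for $i\ge\alpha$ is right; the sharpness example $\Q_2(\sqrt2)/\Q_2$ with $c=3/2$ is correct). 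One cosmetic slip: you write $v_E(p)=[E:F]\,e=p^m e$, which holds only when $E/F$ is totally ramified; in general $v_E(p)=e(E/F)\,e$, but after multiplying through by $f(E/F)$ the equivalence you actually use, namely $\mathcal D_{E/F}\mid(p)^c \iff v_F(\mathfrak d_{E/F})\le c\,p^m e$, is still correct, so nothing breaks.
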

   
   Observe that for $p=2$, the above upper bound takes a simple form: $c \leq \bigl (2+1/e)( 1-1/2^m)$.
   \newtheorem*{unramified}{Corollary 1}
    \begin{unramified} Let $F$ be the unramified extension of $\Q_2$. Let $E/F$ be a finite Galois extension with ramification index
    $e2^m$ with $e$ odd and $m \geq 1$. Assume that the Galois group $G$ of $E/F$ embeds into $GL_2(\F_{2^a})$ for some $a$. 
    Then the different $\mathcal{D}_{E/F}$ of $E/F$ divides $(2)^c$ where 
    
    \begin{displaymath} c \leq 3-\dfrac{1}{2^{m-1}}- \dfrac{1}{e2^m} \end{displaymath}
    
    \end{unramified}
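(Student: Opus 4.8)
The plan is to cut the extension $E/F$ along its ramification filtration and bound the two nontrivial pieces separately. Let $I\subseteq G$ be the inertia subgroup of $E/F$ and $P\subseteq I$ the wild inertia, i.e. the (unique, normal) Sylow $2$-subgroup of $I$; since $e$ is odd and the ramification index of $E/F$ is $e2^m$, we have $\abs{I}=e2^m$ and $\abs{P}=2^m$, so $m\geq 1$ guarantees $P\neq 1$. Let $F\subseteq E^{I}\subseteq E^{P}\subseteq E$ be the corresponding tower of fixed fields: $E^{I}/F$ is unramified, $E^{P}/E^{I}$ is totally tamely ramified of degree $e$, and $E/E^{P}$ is Galois with group $P$ and totally (wildly) ramified. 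Since the different is multiplicative in towers and trivial on the unramified step, with $v_F$ the valuation normalized by $v_F(2)=1$ (legitimate as $F/\Q_2$ is unramified) we get
\[ v_F(\mathcal{D}_{E/F}) = v_F(\mathcal{D}_{E/E^{P}}) + v_F(\mathcal{D}_{E^{P}/E^{I}}). \]
For the tame step $\mathcal{D}_{E^{P}/E^{I}}=\mathfrak{p}_{E^{P}}^{\,e-1}$ and $v_F(\mathfrak{p}_{E^{P}})=1/e$, so that term equals exactly $(e-1)/e$; it remains to bound $v_F(\mathcal{D}_{E/E^{P}})$ via Lemma~1.

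The crucial point — and, I expect, the only genuinely non-formal step — is that Lemma~1 is applicable to $E/E^{P}$, that is, that $P$ is \emph{elementary} abelian. This is exactly where the hypothesis $G\hookrightarrow GL_2(\F_{2^a})$ is used: $\abs{GL_2(\F_{2^a})}=2^{a}(2^{a}-1)^{2}(2^{a}+1)$ has $2$-part precisely $2^{a}$, while the group $U$ of upper-triangular unipotent matrices is an elementary abelian $2$-group of order $2^{a}$; hence $U$ is a Sylow $2$-subgroup, all Sylow $2$-subgroups are conjugate to $U$ and therefore elementary abelian, and so is any $2$-subgroup, in particular $P$. (Equivalently: a $2$-group acting $\F_{2^a}$-linearly on $\F_{2^a}^{2}$ fixes a nonzero vector since $2^{2a}-1$ is odd, and acts trivially on the quotient line since $\F_{2^a}^{\times}$ has odd order, so it is conjugate into $U$.) Thus $\mathrm{Gal}(E/E^{P})=P$ is elementary $2$-abelian of order $2^{m}$ with $m\geq 1$, as Lemma~1 requires.

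Finally, apply Lemma~1 with base field $E^{P}$. Its ramification index over $\Q_2$ is $e(E^{P}/E^{I})\cdot e(E^{I}/F)\cdot e(F/\Q_2)=e$, which is odd, so $\alpha=[e/(2-1)]+1=e+1$ and the lemma gives
\[ v_F(\mathcal{D}_{E/E^{P}}) \leq \Bigl(1+\tfrac{e+1}{e}\Bigr)\Bigl(1-\tfrac{1}{2^{m}}\Bigr) = \Bigl(2+\tfrac{1}{e}\Bigr)\Bigl(1-\tfrac{1}{2^{m}}\Bigr). \]
Adding the tame contribution $(e-1)/e$ from the first paragraph and simplifying,
\[ v_F(\mathcal{D}_{E/F}) \leq \Bigl(2+\tfrac{1}{e}\Bigr)\Bigl(1-\tfrac{1}{2^{m}}\Bigr)+\tfrac{e-1}{e} = 3-\tfrac{1}{2^{m-1}}-\tfrac{1}{e2^{m}}, \]
which is the assertion. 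Apart from the structural fact about Sylow $2$-subgroups of $GL_2$ in characteristic $2$, the argument is pure bookkeeping with the tower formula; the one thing to watch is that every "$(2)^{\bullet}$"-divisibility must be rewritten in the single normalization $v_F$ with $v_F(2)=1$ before the pieces are combined.
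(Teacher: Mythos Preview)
Your proof is correct and follows essentially the same route as the paper: split $E/F$ into the unramified, tame, and wild layers $E^{I}\subseteq E^{P}\subseteq E$ (the paper's $E_0\subseteq E_1\subseteq E$), compute the tame contribution $(e-1)/e$, invoke the elementary-abelian structure of the Sylow $2$-subgroups of $GL_2(\F_{2^a})$ to justify applying Lemma~1 to $E/E^{P}$, and add. Your write-up simply spells out more explicitly why the $2$-Sylow is elementary abelian and how the normalization $v_F(2)=1$ interacts with Lemma~1, but no step differs from the paper's argument.
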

    
    \begin{proof}
     Let $E_1$ (resp. $E_0$) be the maximal tamely ramified (resp. unramified) subextension of $E/F$. Normalize the valuation so that
     $v(2)=1$. It is well known that
     $v(\mathcal{D}_{E_1/E_0})=(e-1)/e$. As the 2-Sylow subgroups of $GL_2(\F_{2^a})$
     are elemantary 2-abelian, so is the Galois group of the extension $E/E_1$. 
     Now by Lemma 1 we have 
     \begin{displaymath} v(\mathcal{D}_{E/E_1}) \leq \biggl ( 2+\dfrac{1}{e} \biggr ) \biggl ( 1-\dfrac{1}{2^m}\biggr ) \end{displaymath}
     Combining the two differents we get
     
     \begin{center}
     \begin{tabular}{rcl} 
    $v(\mathcal{D}_{E/F}) $&$\leq$ &$\biggl ( 2+\dfrac{1}{e} \biggr ) \biggl ( 1-\dfrac{1}{2^m}\biggr )+ \biggl ( \dfrac{e-1}{e}\biggr )$ \\
    && \\
    &$\leq$&  $3-\dfrac{1}{2^{m-1}}- \dfrac{1}{e2^m}$ \\
    \end{tabular}
    \end{center}

    \end{proof}
    
    For ramified case, we will use the following upper bound calculated by Moon and Taguchi in [11].

   \newtheorem*{yeni}{Lemma 2}
    \begin{yeni}  Let $F$ be a ramified quadratic extension of $\Q_2$. Let $E/F$ be a finite Galois extension with ramification index
    $e2^m$ with $e$ odd and $m \geq 1$. Assume that the Galois group $G$ of $E/F$ embeds into $GL_2(\F_{2^a})$ for some $a$. 
    Then the different $\mathcal{D}_{E/F}$ of $E/F$ divides $(2)^c$ where 
    \begin{displaymath}c \leq \dfrac{9}{4}-\dfrac{1}{2^{m-1}} \end{displaymath}
     \end{yeni}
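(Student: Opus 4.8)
I would follow the pattern of Corollary 1. Let $E_0\subseteq E_1\subseteq E$ be the maximal unramified and the maximal tamely ramified subextensions of $E/F$, so $E_1/E_0$ is tame of degree $e$ and $E/E_1$ is wild of degree $2^m$, with elementary $2$-abelian Galois group since the $2$-Sylow subgroups of $GL_2(\F_{2^a})$ are elementary abelian. Normalising $v$ by $v(2)=1$ — so that the ramification index of $E_1$ over $\Q_2$ is now $2e$ because $F/\Q_2$ is ramified — the tame layer contributes $v(\mathcal{D}_{E_1/E_0})=\tfrac{e-1}{2e}$, and Lemma 1 applied over $E_1$ gives $v(\mathcal{D}_{E/E_1})\le (2+\tfrac1{2e})(1-\tfrac1{2^m})$. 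Adding these yields $v(\mathcal{D}_{E/F})\le \tfrac52-\tfrac1{2^{m-1}}-\tfrac1{e2^{m+1}}$, which already proves the lemma when $e=m=1$ but is strictly weaker than $\tfrac94-\tfrac1{2^{m-1}}$ otherwise; the sharpening is the content of [11].

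To recover the missing $\tfrac14$, the plan is to compare $E/F$ with $E/E_0'$, where $E_0'$ is the maximal unramified subextension of $E/\Q_2$ (note that $F\not\subseteq E_0'$). Since $E_0'/\Q_2$ is unramified, $v(\mathcal{D}_{E/F})=v(\mathcal{D}_{E/E_0'})-v(\mathcal{D}_{F/\Q_2})$, and — after reducing, with some care, to the case where the wild part of $E/E_0'$ has elementary $2$-abelian Galois group, e.g. by passing to a Galois extension of $\Q_2$ — one applies Corollary 1 to $E/E_0'$, whose ramification index over $\Q_2$ is $e2^{m+1}$. This gives $v(\mathcal{D}_{E/E_0'})\le 3-\tfrac1{2^m}-\tfrac1{e2^{m+1}}$. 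Since the discriminant of a ramified quadratic extension of $\Q_2$ has valuation $2$ or $3$, i.e. $v(\mathcal{D}_{F/\Q_2})\in\{1,\tfrac32\}$, this settles all cases with $m\ge 2$, as well as $m=1$ when $v(\mathcal{D}_{F/\Q_2})=\tfrac32$. Note that this step genuinely improves on the first paragraph only because it bounds the different of the single elementary abelian layer $E/E_1''$ of order $2^{m+1}$ at once, rather than splitting off the wildly ramified quadratic piece $F/\Q_2$ separately.

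It then remains to treat $m=1$ with $v(\mathcal{D}_{F/\Q_2})=1$, so that $F$ has residue field $\F_2$ and $v_F(2)=2$. Now the wild inertia has order $2$, so $\bar\rho(I_F)\subseteq GL_2(\F_{2^a})$ is an extension of a cyclic group of order $e$ by one of order $2$ and hence is cyclic of order $2e$; moreover the order-$2$ wild part must then sit in a one-parameter unipotent subgroup on which the tame part acts trivially, so $\bar\rho(I_F)$ is ``scalar times unipotent''. The idea is to feed in the structure of the whole image $\bar\rho(\mathrm{Gal}(E/F))$: if $\bar\rho$ is reducible, the entire image lies in the same abelian ``scalar times unipotent'' subgroup, so $E/F$ is abelian, and since $\mathcal{O}_F^\times$ is pro-$2$ local class field theory forces $e=1$; if $\bar\rho$ is irreducible it lies in the normaliser of a Cartan subgroup, and the cyclicity of $\bar\rho(I_F)$ together with the total ramification of $E/E_0$ forces the quadratic subextension of $E/E_0$ to come from the principal units, so its conductor exponent over $E_0$ is at most $2v_{E_0}(2)$ rather than the generic $2v_{E_0}(2)+1$ — exactly enough to bring $v(\mathcal{D}_{E/F})$ below $\tfrac54$. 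With $e=m=1$ the bound $\tfrac54$ then follows from either of the first two paragraphs. The main obstacle is making this case analysis rigorous and, more broadly, extracting the sharp constant — genuinely exploiting the two-dimensionality of $\bar\rho$, the structure of inertia inside $GL_2(\F_{2^a})$, and the Hasse--Arf theorem — which is precisely the discriminant computation of Moon and Taguchi in [11]; a secondary, purely technical, point is the reduction to a Galois situation over $\Q_2$ in the comparison step.
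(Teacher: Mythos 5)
First, a point of order: the paper does not prove this lemma at all --- it is imported verbatim from the Moon--Taguchi preprint [11] (``we will use the following upper bound calculated by Moon and Taguchi''), so there is no in-paper proof to measure yours against. Your first paragraph is correct and correctly locates the difficulty: running the Corollary 1 argument with $e(E_1/\Q_2)=2e$, hence $\alpha=2e+1$ in Lemma 1, gives $v(\mathcal{D}_{E/F})\le \frac{5}{2}-\frac{1}{2^{m-1}}-\frac{1}{e2^{m+1}}$, which equals $\frac{9}{4}-\frac{1}{2^{m-1}}$ exactly when $e=m=1$ and is strictly weaker in every other case. Your arithmetic in the second paragraph (the tower formula $v(\mathcal{D}_{E/F})=v(\mathcal{D}_{E/E_0'})-v(\mathcal{D}_{F/\Q_2})$ and the fact that $v(\mathcal{D}_{F/\Q_2})\in\{1,\frac32\}$) is also right.

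The two devices you propose to close the remaining gap do not work as stated, however. (i) Corollary 1 cannot be applied to $E/E_0'$: the extension $E/\Q_2$ need not be Galois, and even after passing to the Galois closure $\tilde E/\Q_2$ (harmless, since this only enlarges the different), the wild inertia of $\tilde E/\Q_2$ is an extension of $\Z/2\Z$ --- coming from the wildly ramified layer $F/\Q_2$ --- by an elementary abelian $2$-group, and such an extension can be cyclic of order $4$, dihedral, etc. So the elementary-abelian hypothesis of Lemma 1 fails for the wild layer over $\Q_2$, and this is not a removable technicality: the interaction of the wild ramification of $F/\Q_2$ with that of $E/F$ inside one filtration is precisely the content of the sharpening. (ii) The $m=1$, $v(\mathcal{D}_{F/\Q_2})=1$ endgame contains incorrect steps: reducibility of $\rho$ does not make the image abelian (the Borel of $GL_2(\F_{2^a})$ is nonabelian for $a\ge 2$), and local class field theory applied to the totally ramified cyclic extension $E/E_0$ only forces $e\mid 2^{f}-1$ with $f$ the residue degree of $E_0$ over $\Q_2$ --- not $e=1$, since $E_0$ may properly contain $F$; the claimed conductor bound $\le 2v_{E_0}(2)$ for the quadratic layer is asserted without proof. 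A correct argument (this is what [11] does) analyzes the ramification filtration of $E/F$ directly, using Hasse--Arf integrality of the upper-numbering breaks of the abelian layer $E/E_1$ together with the bound $\le 2v(2)+1$ on conductors of quadratic characters. As written, your proof establishes the lemma only for $e=m=1$; the remaining cases rest on an inapplicable corollary and an unproved case analysis, as you yourself acknowledge.
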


   \newtheorem*{discriminant}{Proposition 1}  
   \begin{discriminant} 
    Let $K$ be a quadratic number field and $L$ be a finite Galois extension of $K$ of degree $n$ which is 
    unramified over every odd prime with wild ramification index $2^m$ with $m \geq 1$.  
     Assume $Gal(L/K)$ embeds into $GL_2(\F_{2^a})$ for some $a$. Then $\abs{d_{L/\Q}} \leq \abs{d_{K/\Q}}^n 2^{2cn}$ where 
     
     \begin{itemize}
     
     \item[(a)] if $2$ is ramified in $K$, then  \ $c  \leq \dfrac{9}{4}-\dfrac{1}{2^{m-1}}$ 
     
     \item[(b)] if $2$ is  inert $K$, then \ $c \leq 3-\dfrac{1}{2^{m-1}}- \dfrac{1}{e2^m}$ 
   \end{itemize}
     
   \end{discriminant}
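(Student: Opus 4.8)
The plan is to combine the tower formula for discriminants with the local different estimates already established. Set $n=[L:K]$. The tower formula gives
$$\abs{d_{L/\Q}}=\abs{N_{K/\Q}(\mathfrak d_{L/K})}\cdot\abs{d_{K/\Q}}^{\,n},$$
where $\mathfrak d_{L/K}$ is the relative discriminant ideal and $\abs{N_{K/\Q}(\mathfrak d_{L/K})}$ denotes the positive generator of the ideal $N_{K/\Q}(\mathfrak d_{L/K})\subseteq\Z$; so it suffices to prove $\abs{N_{K/\Q}(\mathfrak d_{L/K})}\le 2^{2cn}$. Since $L/K$ is unramified at every finite prime not lying above $2$ (the infinite places do not contribute to the discriminant), $\mathfrak d_{L/K}$ is supported on primes of $K$ above $2$.

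In each of the cases (a) and (b) there is exactly one such prime $\mathfrak p$: in case (a) one has $2\mathcal O_K=\mathfrak p^{2}$, so $K_\mathfrak p$ is a ramified quadratic extension of $\Q_2$; in case (b) one has $2\mathcal O_K=\mathfrak p$, so $K_\mathfrak p$ is the unramified quadratic extension of $\Q_2$. Fix a prime $\mathfrak P$ of $L$ above $\mathfrak p$. The decomposition group $\mathrm{Gal}(L_\mathfrak P/K_\mathfrak p)$ is a subgroup of $\mathrm{Gal}(L/K)$, hence embeds into $GL_2(\F_{2^a})$, and its ramification index has the form $e2^m$ with $e$ odd, with $m\ge1$ by hypothesis. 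Thus Lemma 2 applies in case (a) and Corollary 1 in case (b): writing $w$ for the valuation on $L_\mathfrak P$ normalized by $w(2)=1$, we obtain $w(\mathcal D_{L_\mathfrak P/K_\mathfrak p})\le c$ with $c$ exactly the quantity appearing in (a) resp.\ (b).

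It remains to feed this local bound into the discriminant. Put $e'=e(\mathfrak P/\mathfrak p)$, $f'=f(\mathfrak P/\mathfrak p)$, and let $\delta=v_\mathfrak P(\mathcal D_{L_\mathfrak P/K_\mathfrak p})$ be the different exponent; since $v_\mathfrak P=e(\mathfrak P/2)\,w$ we get $\delta\le c\,e(\mathfrak P/2)=c\,e'\,e(\mathfrak p/2)$. Because $L/K$ is Galois, all primes above $\mathfrak p$ are conjugate and carry the same invariants, so from $\mathfrak d_{L/K}=N_{L/K}(\mathcal D_{L/K})$ we get $v_\mathfrak p(\mathfrak d_{L/K})=\bigl(\textstyle\sum_{\mathfrak P\mid\mathfrak p}f'\bigr)\delta=\tfrac{n}{e'}\,\delta\le c\,n\,e(\mathfrak p/2)$, using $\sum_{\mathfrak P\mid\mathfrak p}e'f'=n$. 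Applying $N_{K/\Q}$, which sends $\mathfrak p$ to $(2)^{f(\mathfrak p/2)}$, the positive generator of $N_{K/\Q}(\mathfrak d_{L/K})$ is $2^{f(\mathfrak p/2)\,v_\mathfrak p(\mathfrak d_{L/K})}\le 2^{c\,n\,e(\mathfrak p/2)f(\mathfrak p/2)}=2^{c\,n[K:\Q]}=2^{2cn}$, and the tower formula then yields the asserted inequality.

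The only thing requiring care is the bookkeeping among the three normalizations of the $2$-adic valuation — on $\Q_2$, on $K_\mathfrak p$, and on $L_\mathfrak P$ — when converting the normalized different bound of Lemma 2 / Corollary 1 into the exponent of $\mathfrak p$ in $\mathfrak d_{L/K}$, together with the use of $\sum_{\mathfrak P\mid\mathfrak p}e(\mathfrak P/\mathfrak p)f(\mathfrak P/\mathfrak p)=n$ and the fact that $L/K$ being Galois forces all the completions $L_\mathfrak P/K_\mathfrak p$ (hence all local different exponents) to agree. Beyond this there is no genuine obstacle; the hypotheses that actually get used are that $L/K$ is unramified away from $\{2,\infty\}$ and that $\mathrm{Gal}(L/K)\hookrightarrow GL_2(\F_{2^a})$.
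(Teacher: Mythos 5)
Your proposal is correct and follows essentially the same route as the paper: localize at the unique prime of $K$ above $2$, apply Corollary 1 (inert case) or Lemma 2 (ramified case) to the completed extension, and transfer the bound to the global discriminant via the tower formula $d_{L/\Q} = (d_{K/\Q})^{[L:K]}\,\mathrm{Norm}_{K/\Q}(d_{L/K})$ together with $\mathrm{Norm}_{K/\Q}(\mathfrak p)\,$ raised to the exponent giving $e(\mathfrak p/2)f(\mathfrak p/2)=2$ in both cases. The only difference is that you spell out the valuation bookkeeping (conjugacy of the primes above $\mathfrak p$, the factor $\sum e'f'=n$) that the paper leaves implicit.
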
  
     
    \begin{proof} We take a place $\mathfrak{p}$ of $K$ over $2$ and a place $\mathfrak{q}$ of $L$ over $\mathfrak{p}$. We complete $K$ and 
    $L$ at $\mathfrak{p}$ and $\mathfrak{q}$ respectively and get an extension of local fields. We apply Corollary 1 or Lemma 2 to this local 
    extension depending on the ramification of 2 in $K/ \Q$. 
    The claim follows by passing from local to global discriminant and by the fact that 
    $d_{L/\Q} = (d_{K/\Q})^{[L:K]} \   \text{Norm}_{K/\Q}(d_{L/K})$. Note that $\text{Norm}_{K/\Q}(2)=2^2$ in both cases.
    \end{proof}
    
    For lower bounds on discriminants we will use the Odlyzko-Poitou bounds [14]. Let $L/\Q$ be of degree $m$. Then
    
    \begin{displaymath} \gamma+\log{(4\pi)}-6.860404 m^{-2/3} \leq \dfrac{1}{m} \log{(\abs{d_{L/\Q}})} \end{displaymath}
    where $\gamma$ is the Euler constant.
    
    We compare these upper and lower bounds in the nonsolvable case now. Let $K$ be a quadratic field and let $L/K$ be a nonsolvable Galois extension
    ramified only over $\{ 2, \infty \}$ whose Galois group $G$ embeds into $GL_2(\F_{2^a})$ for some $a$. Let $n$ be the degree of $L/K$. 
    Note that the degree of $L/\Q$ is $2n$. 
    
    Assume that $2$ is ramified in $K/ \Q$. If $L/K$ is at most tamely ramified, then $d_{L/K}$ divides $\mathfrak{p}^n$ where $\mathfrak{p}$ 
    is a place of $K$ over $2$. Since the norm of $\mathfrak{p}$ is $2$, $\abs{d_{L/\Q}} \leq \abs{d_{K/\Q}}^n  2^n$ . Thus
    
    \begin{displaymath} 2(\gamma+\log{(4\pi)}-6.860404(2n)^{-2/3}) \leq  \log{\abs{d_{K/Q}}} +  \log{2} \end{displaymath}
    As $G$ is nonsolvable, $n \geq 60$. For $\abs{d_{K/\Q}} \leq 2^{128}$, this inequality gives a contradiction for all $n\geq 60$.
    
    Now assume that $L/K$ is wildly ramified with ramification index $2^m$. Using Lemma 2, we have
    \begin{displaymath} 2(\gamma+\log{(4\pi)}-6.860404(2n)^{-2/3}) \leq \log{\abs{d_{K/Q}}}+ 2c\log{2} \end{displaymath}
    where $c=\frac{9}{4}-\frac{1}{2^{m-1}}$.
    
    As Tate observes in [18], we have $\frac{n}{2^{m-1}}\geq 30$ because $2^m$ divides $n$ and $n$ is divisible by at least three distinct primes
    as it is the order of a nonsolvable group. Now we get
    
    \begin{displaymath} 2 \biggl ( .5772+2.53102-\dfrac{6.860404}{2^{2/3}n^{2/3}} \biggr ) \leq \log{\abs{d_{K/Q}}}+ 1.386295 \biggl (2.125-\dfrac{30}{n} \biggr )\end{displaymath} 
    
    \begin{displaymath} 6.216448-\dfrac{8.64356}{n^{2/3}} \leq \log{\abs{d_{K/Q}}} + 2.94587-\dfrac{41.588}{n}\end{displaymath} 
    
   \begin{displaymath} 3.27057+f(n) \leq \log{\abs{d_{K/Q}}}  \end{displaymath} 
    where $f(x)=\frac{A-Bx^{1/3}}{x}$ with $A=41.588$ and $B=8.64356$. The function $f(x)$ decreases until it reaches its minimum 
    at $x_0=(\frac{3A}{2B})^3 \approx 375.923$ with minimum value $f_{min}= \frac{-A}{2x_0}$ and then
    it increases approaching $0$ as $x$ tends to infinity. So, if  $\log{\abs{d_{K/Q}}} \leq 3.27057 + f_{min}\approx  3.21525$, 
   the last inequality gives a contradiction for any $n \geq 60$. Thus we get $\abs{d_{K/Q}} < 24.9$,  
   proving the claim for the fields $K=\Q (\sqrt{d})$ with $d=6,3,2,-1,-2,-5,-6$.
    
    Now assume that $2$ is inert in $K/ \Q$. As $e2^m$ is the order of the solvable local inertia group, its index in nonsolvable $G$ 
    has to be at least 3, thus $\frac{n}{e2^m} \geq 3$. Using Corollary 1, we get
    
    \begin{displaymath} 6.216448-\dfrac{8.64356}{n^{2/3}} \leq \log{\abs{d_{K/Q}}} + 1.386295 \biggl (3-\dfrac{30}{n}- \dfrac{3}{n} \biggl )\end{displaymath}

    \begin{displaymath} 2.057563+g(n) \leq \log{\abs{d_{K/Q}}} \end{displaymath} 
    where $g(x)=\frac{A-Bx^{1/3}}{x}$ with $A=45.7477$ and $B=8.64356$. The minimum value of $g(x)$ is attained at $x_0 \approx 500.385$. If 
   $\log{\abs{d_{K/Q}}} \leq 2.057563 + g_{min}\approx  2.011863$, 
   the last inequality gives a contradiction for any $n \geq 60$. 
    Thus we get $\abs{d_{K/Q}} < 7.477$, proving the claim for the fields $K=\Q (\sqrt{d})$ with $d=-3,5$.
    
   This completes the proof Theorem A.

    \vspace{.1 in}

    \section{Solvable Case, $p=2$}

    Let $L/K$ be a solvable Galois extension with Galois
    group $G$ that is ramified only over $\{ 2, \infty \}$. Assume that there is an embedding $\rho : G \hookrightarrow GL_2(\F_{2^a})$ for some $a$.
    If we show that $G$ is a $2$-group then a conjugate of the image of $G$ will be inside the Sylow 2-subgroup
   $ T=\lbrace \bigl( \begin{smallmatrix} 1 & x  \\ 0 & 1 \\ \end{smallmatrix} \bigr) | \ x \in \F_{2^a} \rbrace$ of $GL_2(\F_{2^a})$. 
   Thus $\rho$ will be reducible.

    Let $S$ be a $2$-Sylow subgroup of $G$. Then $S$ is elementary $2$-abelian as $T$ is.
    Let $G'$ be the commutator subgroup of $G$. To show that $G$ is a $2$-group, it is enough to show that $G/G'$  and  $G'/G''$ are $2$-groups. 
    If they are, then $G/G''$ is a 2-group and it is abelian as it is a homomorphic image of $S$. Indeed, 
   $G/G'' \backsimeq SG''/G'' = S/S \cap G''$. Hence $G'=G''$. Since $G$ is solvable, 
    we have $G'=1$ and thus $G$ is a $2$-group. 
    
    \vspace{.1 in}

    \textit{In the rest of this section, $K=\Q(\sqrt{d})$ with $d=6,5,3,2,-1,-2,-3,-5$ or $-6$.} 
    
    \vspace{.1 in}

    Observe that 2 is either inert ($d= -3, 5$) of ramified in $K/ \Q$. Let $\mathfrak{p}$ denote the only place of $K$ above 2. 
    We will prove that $G/G'$  and  $G'/G''$ are $2$-groups.

   \newtheorem*{rayclass}{Proposition 2}
    \begin{rayclass}
    The ray class group of $K$ with modulus $\mathfrak{p}^k \mathfrak{m}_{\infty}$ is a $2$-group for any $k$ where $\mathfrak{m}_{\infty}$
    is the modulus of all the real archimedean places of $K$.
   
    \end{rayclass}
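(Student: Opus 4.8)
The plan is to control $Cl_{\mathfrak{m}}(K)$, where $\mathfrak{m}=\mathfrak{p}^k\mathfrak{m}_\infty$, by means of the standard exact sequence of class field theory
\[
\mathcal{O}_K^\times \longrightarrow (\mathcal{O}_K/\mathfrak{p}^k)^\times \times \{\pm 1\}^{r} \longrightarrow Cl_{\mathfrak{m}}(K) \longrightarrow Cl(K) \longrightarrow 1,
\]
in which $r$ is the number of real places of $K$ ($r=2$ if $d>0$ and $r=0$ if $d<0$) and $Cl(K)$ is the ordinary ideal class group. Since $Cl_{\mathfrak{m}}(K)$ is thus an extension of $Cl(K)$ by the cokernel $C$ of the left-hand map, it suffices to show that both $Cl(K)$ and $C$ are $2$-groups. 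The first is immediate: for the nine fields in question $h_K=1$, except for $d=-5,-6$ where $h_K=2$. The factor $\{\pm1\}^{r}$ is a $2$-group, so everything comes down to the image of $\mathcal{O}_K^\times$ in $(\mathcal{O}_K/\mathfrak{p}^k)^\times$.

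To analyse $(\mathcal{O}_K/\mathfrak{p}^k)^\times$, I would use the filtration by the subgroups $1+\mathfrak{p}^i$, $1\le i\le k$. Each quotient $(1+\mathfrak{p}^i)/(1+\mathfrak{p}^{i+1})$ is isomorphic to the additive group $\mathfrak{p}^i/\mathfrak{p}^{i+1}\cong\F_{2^f}$, an elementary abelian $2$-group, where $f$ is the residue degree of $\mathfrak{p}$ over $2$; and the quotient by $1+\mathfrak{p}$ is the cyclic group $(\mathcal{O}_K/\mathfrak{p})^\times\cong\F_{2^f}^\times$ of order $2^f-1$. Hence when $2$ ramifies in $K/\Q$ we have $f=1$, the group $(\mathcal{O}_K/\mathfrak{p}^k)^\times$ is already a $2$-group, and $C$ is a $2$-group with no further work; this settles the seven fields with $d\neq-3,5$.

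There remain the two fields $d=-3$ and $d=5$, for which $2$ is inert and $f=2$. Now the only obstruction to $C$ being a $2$-group is the order-$3$ group $\F_4^\times$ sitting atop $(\mathcal{O}_K/\mathfrak{p}^k)^\times$, so I would check that the composite $\mathcal{O}_K^\times\to(\mathcal{O}_K/\mathfrak{p}^k)^\times\to\F_4^\times$ is surjective. For $d=-3$ one has $\mathcal{O}_K=\Z[\omega]$ with $\omega$ a primitive cube root of unity; $\omega$ is a unit, and its reduction mod $\mathfrak{p}=(2)$ is a primitive cube root of unity in the residue field $\F_4$, hence a generator of $\F_4^\times$. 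For $d=5$ the fundamental unit $\varepsilon=(1+\sqrt5)/2$ reduces mod $\mathfrak{p}=(2)$ to a root of $x^2+x+1$ in $\F_4$, again a generator of $\F_4^\times$. In both cases the map onto $\F_4^\times$ is onto, so $C$ is a $2$-group, and combined with the previous paragraphs the exact sequence shows $Cl_{\mathfrak{m}}(K)$ is a $2$-group for every $k$. The only step that is not purely formal is this surjectivity check in the inert case, and even it is a one-line computation once the correct units are written down; so I expect no real obstacle.
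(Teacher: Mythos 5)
Your proposal is correct, and its skeleton is exactly the paper's: the same class-field-theoretic exact sequence relating $\operatorname{Cl}(K,\mathfrak{p}^k\mathfrak{m}_\infty)$ to $\operatorname{Cl}(K)$, $(\mathcal{O}_K/\mathfrak{p}^k)^\times$ and the real places, the observation that the prime-to-$2$ part of $(\mathcal{O}_K/\mathfrak{p}^k)^\times$ is cyclic of order $2^f-1$, and the resulting split into the ramified case ($f=1$, nothing to do) and the inert case $d=-3,5$ ($f=2$, a possible $3$-part). Where you genuinely depart from the paper is in how the inert case is killed: the paper notes that the $3$-rank is independent of $k$ and then verifies by a MAGMA computation that the ray class group of modulus $(2)\mathfrak{m}_\infty$ has trivial $3$-rank, whereas you show directly that a global unit ($\omega$ for $d=-3$, the fundamental unit $(1+\sqrt{5})/2$ for $d=5$, each a root of $x^2+x+1$ mod $2$) reduces to a generator of $\mathbb{F}_4^\times$, so the image of $\mathcal{O}_K^\times$ already covers the whole odd part of $(\mathcal{O}_K/\mathfrak{p}^k)^\times$ for every $k$ at once. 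Your version is computer-free and makes the uniformity in $k$ transparent via the filtration by $1+\mathfrak{p}^i$ (whose graded pieces are $2$-groups), at the cost of having to identify the right units by hand; the paper's version is a black-box verification but generalizes mechanically to fields where no convenient unit presents itself. Both are complete proofs of the proposition.
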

    
    \begin{proof}
    Let $\mathcal{O}_K$ be the ring of integers of $K$ and $U$ be the group of units of $\mathcal{O}_K$. 
    Let $ \operatorname{Cl}(K)$ be the ideal class group of $ K$ and let $ \operatorname{Cl}(K,\mathfrak{p}^k\mathfrak{m}_{\infty})$ be 
    the ray class group of $K$ of modulus $ \mathfrak{p}^k\mathfrak{m}_{\infty}$ with fixed positive integer $k$.
    
    We have the following exact sequence from class field theory
    
    $$ (\ast) \ \ \ U  \rightarrow (\mathcal{O}_K/\mathfrak{p}^k)^* \times |\Z / 2\Z|^{|\mathfrak{m}_{\infty}|}\rightarrow \operatorname{Cl}(K,\mathfrak{p}^k\mathfrak{m}_{\infty}) \rightarrow  \operatorname{Cl}(K) \rightarrow 1$$
    
    It is known that the prime to $2$ part of $(\mathcal{O}_K/\mathfrak{p}^k)^*$ is $\Z / (2^f-1)\Z$ where $f$ is the residue degree of $\mathfrak{p}$. 
    Thus if $2$ is ramified in $K$, then $(\mathcal{O}_K/\mathfrak{p}^k)^*$ is a $2$-group. Since the class numbers of $K$'s are all powers of 2, the result
    follows in this case. If $2$ is inert, there may be a non-trivial $3$-part of the ray class group. Note that the $3$-rank is the same for every $k$. For the two
   inert fields, we verify with MAGMA that the ray class group with modulus $(2)\mathfrak{m}_{\infty}$ has $3$-rank zero for all $d$'s.
    \end{proof}
    
   Let $F$ be the fixed field of $G'$. Then $F$ is an abelian extension of $K$ that is ramified only over $\{ 2,\infty \}$ and $F$ is contained in a ray class field
   of $K$ with modulus $\mathfrak{p}^k \mathfrak{m}_{\infty}$ for some $k$. By Proposition 2, such a ray class field has degree power of $2$ over $K$. 
   Thus $G/G'$ is a $2$-group. 
   
   The group $G'/G''$ corresponds to an abelian extension of $F$ that is only ramified over $\{ 2,\infty \}$ and thus is contained in a ray class field
   of $F$ with modulus $(2)^k \mathfrak{m}_{\infty}$ for some $k$.
   Using MAGMA, we will verify for each possible $F$ that these ray class groups are $2$-groups and conclude that $G'/G''$ is a 2-group.
   First, we use the following theorem of Nakagoshi [12] to find a field $A$
   which contains all possible $F$'s.
   
   \newtheorem*{Nakagoshi}{Theorem 1}
   \begin{Nakagoshi}
   Let $N$ be a number field with ramification index $e$ and residue degree $f$ over the rational prime $p$ and let 
   $\mathfrak{p}$ be a prime ideal of the ring of integers $\mathcal{O}$ of $N$ over $p$. Set 
   $e_1=\biggl [ \dfrac{e}{p-1} \biggr ]$ where $[x]$ is the maximal integer $\leq x$. Let $N_{\mathfrak{p}}$ denote the completion
   of $N$ at $\mathfrak{p}$. Then the $p$-rank $R_n$ of $(\mathcal{O}/\mathfrak{p}^{n+1})^*$ is given by

   \begin{center}
   \begin{tabular}{lcl}
   $R_n= \biggl ( n- \biggl [ \dfrac{n}{p} \biggr ] \biggr ) f,$&&if $0 \leq n < e+e_1$ \\
   $R_n=ef$,&& if $n \geq e+e_1$ and $\zeta_p \not \in N_{\mathfrak{p}}$\\
   $R_n=ef+1$,&& if $n \geq e+e_1$ and $\zeta_p  \in N_{\mathfrak{p}}$ \\
   \end{tabular}
   \end{center} 
   \end{Nakagoshi}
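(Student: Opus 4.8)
The plan is to reduce the statement to a computation inside the completion $N_{\mathfrak p}$ and then to track how raising to the $p$-th power acts on the canonical filtration of the group of principal units. First I would use the canonical isomorphism $(\mathcal O/\mathfrak p^{n+1})^* \cong (\mathcal O_{\mathfrak p}/\mathfrak p^{n+1})^*$, where $\mathcal O_{\mathfrak p}$ is the ring of integers of $N_{\mathfrak p}$, with uniformizer $\pi$ and residue field $\F_q$, $q=p^f$, and I normalize the valuation $v$ so that $v(\pi)=1$, whence $v(p)=e$. The Teichm\"uller decomposition gives $(\mathcal O_{\mathfrak p}/\mathfrak p^{n+1})^* \cong \F_q^* \times A$ with $A = U^{(1)}/U^{(n+1)}$ and $U^{(i)} = 1+\mathfrak p^i\mathcal O_{\mathfrak p}$; since $\F_q^*$ has order prime to $p$, the $p$-rank $R_n$ equals $\dim_{\F_p}(A/A^p)$, a question purely about the finite abelian $p$-group $A$.

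Next I would introduce the filtration $A = A_1 \supseteq A_2 \supseteq \cdots \supseteq A_{n+1}=1$, $A_i = U^{(i)}/U^{(n+1)}$, noting that $1+x\pi^i \mapsto x \bmod \mathfrak p$ identifies each graded piece $A_i/A_{i+1}$ with $\F_q$, so $\dim_{\F_p}A = nf$. The core computation is the map induced by $u \mapsto u^p$ on graded pieces. Expanding $(1+x\pi^i)^p = 1 + px\pi^i + \cdots + x^p\pi^{pi}$ and using $v(\binom{p}{k}) = v(p) = e$ for $1 \le k \le p-1$, one sees that $u^p \in A_{v(i)}$, where $v(i) = pi$ if $i < e/(p-1)$, $v(i) = e+i$ if $i > e/(p-1)$, and $v(i) = pi = e+i$ when $(p-1)\mid e$ and $i = e_1 := e/(p-1)$. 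The induced $\F_p$-linear map $\psi_i \colon A_i/A_{i+1} \to A_{v(i)}/A_{v(i)+1} \cong \F_q$ is the Frobenius $\bar x\mapsto\bar x^p$ in the first range, multiplication by the nonzero scalar $\overline{p\pi^{-e}}$ in the second, and $\bar x \mapsto \bar x^p + \overline{p\pi^{-e}}\bar x$ in the boundary case. All of these maps are injective except possibly the last; for it, $\ker\psi_{e_1} = \{0\} \cup \{x : x^{p-1} = -\overline{p\pi^{-e}}\}$ has $\F_p$-dimension $0$ or $1$, the latter occurring exactly when $-\overline{p\pi^{-e}}$ is a $(p-1)$-th power in $\F_q^*$. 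Then I would identify this last condition with $\zeta_p \in N_{\mathfrak p}$: since $\mathbb Q_p(\zeta_p)/\mathbb Q_p$ is totally ramified of degree $p-1$, $\zeta_p \in N_{\mathfrak p}$ forces $(p-1)\mid e$; and from $\prod_{j=1}^{p-1}(1-\zeta_p^j) = p$ with Wilson's theorem one gets $\mathbb Q_p(\zeta_p) = \mathbb Q_p\big((-p)^{1/(p-1)}\big)$, so that $\zeta_p \in N_{\mathfrak p}$ holds iff $N_{\mathfrak p}$ contains a $(p-1)$-th root of $-p$, which (writing such a root as $w\pi^{e_1}$) happens iff $-p\pi^{-e}$ is, up to a principal unit, a $(p-1)$-th power of a unit, equivalently, by Hensel's lemma, iff its reduction is a $(p-1)$-th power in $\F_q^*$.

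Finally I would assemble $R_n$. The key claim is that, inside $A_j/A_{j+1}$, the image of $A_j \cap A^p$ is exactly $\operatorname{im}\psi_i$ when $j = v(i)$, and is $0$ when $j$ is not a value of $v$. Granting this, the induced filtration of $A/A^p$ yields $R_n = \sum_{j=1}^n c_j$, where $c_j = f$ if $j$ is not a value of $v$ on $\{1,2,\dots\}$, $c_j = 0$ if $j = v(i)$ with $\psi_i$ bijective, and $c_j = \dim\ker\psi_{e_1} \in\{0,1\}$ at the one boundary level $j = e+e_1$. Since $v$ is strictly increasing on each of its two ranges and the ranges do not overlap in image, $v$ is injective; and a short check shows that for $n < e+e_1$ the only values of $v$ that are $\le n$ are the multiples of $p$, so $\#\{j \le n : j \notin \operatorname{im} v\} = n - [n/p]$, while for $n \ge e+e_1$ this count equals $e$. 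Multiplying by $f$ and adding the boundary term $c_{e+e_1}$ — which is $1$ precisely when $\zeta_p\in N_{\mathfrak p}$, and never arises when $(p-1)\nmid e$ — gives exactly the three stated formulas.

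The step I expect to be the main obstacle is the key claim of the last paragraph: that $A^p$ meets each successive quotient of the filtration in precisely $\operatorname{im}\psi_i$. The nontrivial inclusion requires taking an arbitrary $p$-th power $w^p$ lying in $A_j$, writing $w = 1 + x\pi^i$ with $x$ a unit, and checking in each valuation range that the class of $w^p$ in $A_j/A_{j+1}$ lies in $\operatorname{im}\psi_{v^{-1}(j)}$; the only delicate point is when $\bar x$ lies in $\ker\psi_{e_1}$, where $v(w^p-1)$ jumps strictly past $e+e_1$ — but then $w^p$ already sits in the range where every $\psi_i$ is a surjective scalar multiplication, so the inclusion is automatic and the apparent circularity dissolves. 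Everything else is routine valuation bookkeeping together with the structure theorem for finite abelian $p$-groups.
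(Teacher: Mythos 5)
This statement is quoted in the paper as a known theorem of Nakagoshi, and the paper gives no proof of it --- it is imported as a black box from reference [12] --- so there is nothing internal to compare your argument against. On its own terms your proof is correct and is essentially the classical filtration argument (which is also the engine of Nakagoshi's paper, though he determines the full group structure rather than just the $p$-rank). All the load-bearing steps check out: the reduction to $A=U^{(1)}/U^{(n+1)}$ via the Teichm\"uller splitting; the computation that the $p$-th power map sends level $i$ to level $\min(pi,\,e+i)$ with graded map Frobenius, a unit scalar, or $\bar x\mapsto \bar x^p+\overline{p\pi^{-e}}\,\bar x$ at the boundary (only $k=1$ and $k=p$ in the binomial expansion matter, since $v(\binom{p}{k})=e$ forces the terms $2\le k\le p-1$ to valuation at least $e+2i$); the identification of the boundary kernel condition with $\zeta_p\in N_{\mathfrak p}$ via $\Q_p(\zeta_p)=\Q_p((-p)^{1/(p-1)})$ and Hensel; and the final count, where I verified that $\#\{i\ge 1: \min(pi,e+i)\le n\}$ equals $[n/p]$ for $n<e+e_1$ and $n-e$ for $n\ge e+e_1$, giving the three stated formulas. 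The one step you flag as delicate --- that an arbitrary $p$-th power landing in $A_j$ has graded class in $\operatorname{im}\psi_{v^{-1}(j)}$ even when its level jumps past $e+e_1$ --- is resolved exactly as you say, because beyond the boundary every $\psi_i$ is a surjective scalar map. Two small housekeeping points you should make explicit in a written version: that $\psi_i$ is well defined on the quotient $A_i/A_{i+1}$ (because $v(i+1)\ge v(i)+1$), and that each element of $A^p$ is a single $p$-th power (so the image of $A^p\cap A_j$ in the graded piece really is the set of classes you compute, which is then automatically the subgroup $\operatorname{im}\psi_{v^{-1}(j)}$). Neither is a gap, just bookkeeping.
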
   

   Combining this result with the exact sequence $(\ast)$, we see that the $2$-ranks of ray class groups of modulus $(2)^k\mathfrak{m}_{\infty}$ stabilize
   after $k=5$ for every quadratic field. Thus there exists a maximal elementary 2-abelian extension $A$ of $K$ that is only ramified over $\{ 2, \infty \}$.
   As $G/G'$ is elementary $2$-abelian (it is a homomorphic image of $S$), $F$ is a subfield of $A$. 
   For every $d$, we list a defining polynomial of $A$ over $\Q$, class number $h$ of $A$ and the decomposition ($e,f,g$) of $2$ in $A/\Q$.
   
   \vspace{.1 in}
   
   \begin{center}
   \begin{tabular}{|c|c|c|c|} \hline
$\textbf{d}$ & $\textbf{A}$ & $\textbf{h}$ & $\textbf{e,f,g}$ \\ \hline\hline

6& $x^{16} + 4x^{12} + 15x^8 + 4x^4 + 1$ & 1 & 8,2,1 \\ \hline
5& $x^{16} - 12x^{14} + 58x^{12} - 29x^8 + 58x^4 + 12x^2 + 1$ & 1 & 8,2,1 \\ \hline
3& $x^{16} + 4x^{14} + 56x^{12} + 36x^{10} + 542x^8 + 636x^6 + 248x^4 + 28x^2 + 1$ & 1 & 8,2,1 \\ \hline
2& $x^{16} + 4x^{12} + 40x^{10} + 104x^8+112x^6 + 56x^4 + 16x^2 + 4 $& 1 & 16,1,1 \\ \hline
$-1$ & $x^8 + 4x^6 + 22x^4 + 4x^2 + 1$ &  1 & 8,1,1 \\ \hline
$-2$ & $x^8 + 4x^6 + 10x^4 - 20x^2 + 9$ & 1 & 8,1,1 \\ \hline
$-3$ & $x^8 - 10x^6 + 31x^4 - 6x^2 + 9$ & 1 & 4,2,1 \\ \hline
$-5$ & $x^8 + 32x^6 + 248x^4 + 512x^2 +16$ & 1 & 4,2,1 \\ \hline
$-6$ & $x^8 + 24x^6 + 248x^4 - 288x^2 +2704$ & 1 & 4,2,1 \\ \hline

\multicolumn{4}{c}{\begin{small} Table 1 \end{small}} \\
\end{tabular} 
\end{center}   

\vspace {.1 in}

    We compute the class numbers of all subfields of $A$ for every $d$ and see that they are all powers of $2$.

    From Table 1, we see that residue degree $f$ of $2$ in $A$ is either 1 or 2. We also observe that each subfield of $A$ has only one place
    over $2$. By the exact sequence $(\ast)$, we see that for the subfields of $A$ 
    with $f=1$, the 3-rank of its ray class group with modulus $(2)^k\mathfrak{m}_{\infty}$ will be 0.
    For the subfields of $A$ with $f=2$ which contain $K$, 
    we check the $3$-rank of their ray class groups with modulus $(2)\mathfrak{m}_{\infty}$ and see that it is $0$ in all instances. 
    This shows that $G'/G''$ is a $2$-group for all the quadratic fields listed in Theorem B and thus completes the proof of Theorem B. 
    
    \section{The Case $p=3$}
    We apply Lemma 1 to the case $p=3$ and get the following: 
    
    \newtheorem*{discriminant3}{Proposition 3}  
   \begin{discriminant3} 
    Let $K$ be a quadratic field ramified over $3$ and $L$ be a finite Galois extension of $K$ of degree $n$ which is 
    unramified away from $\{ 3,\infty \}$. Let the ramification index of $L/K$ be $e3^m$ with $m \geq 1$ and $(e,3)=1$.  
    Assume $Gal(L/K)$ embeds into some $GL_2(\F_{3^a})$. Then $\abs{d_{L/\Q}} \leq \abs{d_{K/\Q}}^n 3^{2cn}$ where 
     
    \begin{displaymath} c  \leq 2-\dfrac{1}{2\cdot 3^{m-1}}-\dfrac{1}{2 e \cdot 3^m}\end{displaymath}

   \end{discriminant3}  
  
   \begin{proof} Just as in the proof of Proposition 1, we look at the local differents. 
    We suitably complete $K$ and $L$ over $3$ to get the local extension $E/F$. Let $E_1$ (resp. $E_0$) be the maximal tamely ramified 
    (resp. unramified) subextension of $E/F$. Normalize the valuation so that $v(3)=1$. We have
     $v(\mathcal{D}_{E_1/E_0})=(e-1)/2e$. Gal$(E/E_1)$ is an elementary 3-abelian group and by
    Lemma 1, we see that $\mathcal{D}_{E/F}$ divides $(3)^c$ where 
   
   \begin{center}
   \begin{tabular}{rl}  
   $c$  & $\leq\biggl ( 1+\dfrac{\alpha}{2e}\biggr )\biggl ( 1-\dfrac{1}{3^m}\biggr )+ \dfrac{e-1}{2e}$ \\ 
   & \\
   & $ \leq \biggl ( 1+\dfrac{1}{2}+ \dfrac{1}{2e} \biggr )\biggl ( 1-\dfrac{1}{3^m}\biggr ) +\dfrac{1}{2}- \dfrac{1}{2e}$ \\
   &\\
   & $ \leq \dfrac{3}{2}+\dfrac{1}{2}-\dfrac{1}{2 \cdot 3^{m-1}} -\dfrac{1}{ 2e \cdot 3^m}$ \\

  \end{tabular}
  \end{center}
  We pass to the local and then to global discriminant and get the desired result.
   \end{proof}
  
    We follow Section 2 and Section 3 to prove Theorem C. Let $L/K$ be an extension satisfying the hypothesis of Proposition 3. 
    Assume that $L/K$ is nonsolvable. Using the lower bound of Section 2, we get
      
    \begin{displaymath} 6.216448-\dfrac{8.64356}{n^{2/3}} \leq \log{\abs{d_{K/\Q}}}+ 2.197225 \biggl (2-\dfrac{33}{2n} \biggr )\end{displaymath}

    \begin{displaymath} 1.82198+h(n) \leq \log{\abs{d_{K/\Q}}}  \end{displaymath}
    where $h(x)=\frac{A-Bx^{1/3}}{x}$ with $A=41.36.254$ and $B=8.64356$. The minimum is attained at $x_0 \approx 249.041$. For 
    $\log{\abs{d_{K/Q}}} < 1.82198 + h_{min}\approx  1.749$, 
   the last inequality gives a contradiction for any $n \geq 60$. Thus we get $\abs{d_{K/ \Q}} < 5.7 $,  
   only giving $\Q(\sqrt{-3})$.
   
    Now let $L/\Q(\sqrt{-3})$ be a solvable extension satisfying the hypothesis of Proposition 3. Let $G$ be the Galois group of
    this extension. We want to show that $G/G'$ and $G'/G''$ are both 3-groups following Section 3. By the exact sequence $(\ast)$ of Section 3,
    we see that any ray class group of $\Q(\sqrt{-3})$ with modulus $(3)^k \mathfrak{m}_{\infty}$ is a $3$-group because 
    the class number of $\Q(\sqrt{-3})$ is 1 and it has no infinite places and residue degree of 3 is one. Thus $G/G'$ is a $3$-group.
    Now let $A$ be the maximal elementary 3-abelian extension of $\Q(\sqrt{-3})$ that is unramified over $\{ 3, \infty \}$. Using MAGMA, we find
    a defining polynomial for $A$ over $\Q$ : $x^{18} - 9x^{15} + 135x^{12} + 540x^9 + 2673x^6 + 1458x^3 + 729$. The decomposition of $3$ in $A/ \Q$
    is $(18,1,1)$ which means for any subfield the residue degree of 3 is one as well. We verify that all subfields of $A$ 
    containing $K$ have class number 1 and have no real infinite places. As in Section 3, we conclude that $G'/G''$ is a 3-group. This proves Theorem C.
    
    \section{A List of Pairs Not Satisfying $(\dag)$}
    We now investigate the pairs $(K,2)$ for which $(\dag)$ fails. 
    The simplest case is a $GL_2(\F_2) \backsimeq Sym(3)$ extension $L/K$ that is ramified only over $\{2, \infty \}$. Using group theory with MAGMA, we have searched 
   the number fields database of J.Kl\"{u}ners and  G.Malle [8] for $Sym(3)$ extensions of quadratic fields with little or no ramification. 
   In Table 2, we list some of our findings for imaginary $K$. In each case, $L$ is the splitting field of the given polynomial over $\Q$ and the third column is ramification
   of finite places in $L/K$.

   \begin{center}
   \begin{tabular}{|c|c|c|} \hline
   $d$ & $f(x)$& ramification \\ \hline
   $-13$&$ x^6 + x^4 + 4x^3 + 36x^2 - 24x + 4$& only over $2$ \\ \hline
   $-19$&$x^6 - 8x^5 + 23x^4 - 24x^3 + x^2 + 14x + 4$& only over $2$ \\  \hline
   $-22$&$x^6 - 2x^5 + 5x^4 + 8x^3 + 47x^2 + 90x + 47$& only over $2$ \\  \hline
   $-37$&$x^6 + 4x^5 + 23x^4 - 4x^3 + 71x^2 - 288x + 293$& only over $2$ \\  \hline
   $-38$&$x^6 + 6x^5 + 33x^4 + 60x^3 + 89x^2 - 258x + 207$& only over $2$ \\  \hline
   $-46$&$x^6 + 6x^5 + 21x^4 + 52x^3 + 291x^2 + 326x + 271$& unramified \\ \hline
   $-58$&$x^6 + 8x^5 + 40x^4 + 60x^3 +261x^2 + 380x + 382$ & only over $2$ \\  \hline
   $-62$&$ x^6 + 6x^5 + 45x^4 + 132x^3 + 179x^2 +246x + 423$& unramified \\  \hline
   $-74$&$x^6 + 6x^5 + 41x^4 + 32x^3 +101x^2 - 654x + 691$& only over $2$ \\  \hline
   $-79$&$ x^6 - 3x^5 + 14x^4 - 4x^3 + 40x^2+ 64x + 64$ & only over $2$ \\ \hline
      
   \multicolumn{3}{c}{Table 2}
   
   \end{tabular}
   \end{center}
  
   \section{Application to Elliptic Curves over Quadratic Fields} 
   
   Let $K=\Q(\sqrt{d})$ be a quadratic field. Assume that $E$ is an elliptic curve over $K$ that has good reduction away from $2$. 
   Let $G$ be the Galois group of the finite extension
   $K(E[2])/K$ where $K(E[2])$ is the extension of $K$ obtained by adjoining coordinates of points of $E$ that are of order $2$.
   It is well known that there is a continuous representation $$\rho: G \hookrightarrow GL_2(\F_2)$$
   which is ramified away from $2$. If $d=6,5,3,2,-1,-2,-3,-5,-6$ then by the proof of Theorem B, $G$ must be a $2$-group. This implies that $G$ is either trivial or
    it is $\Z / 2$. This is true only if $E$ has a $K$-rational point of order 2. Thus we showed that
    
   \newtheorem*{elliptic}{Proposition 4}
   \begin{elliptic}
   For $d=6,5,3,2,-1,-2,-3,-5,-6$, 
   if $E$ is an elliptic curve over $K$ that has good  reduction away from $2$ then $E$ has a $K$-rational point of order $2$.
   \end{elliptic}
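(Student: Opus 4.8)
The plan is to leverage Theorem B directly. Let $E/K$ be an elliptic curve with good reduction away from $\{2\}$, and set $L=K(E[2])$. The $2$-torsion $E[2]$ is a $2$-dimensional $\F_2$-vector space on which $G=\mathrm{Gal}(L/K)$ acts faithfully, so we obtain an injection $\rho:G\hookrightarrow GL_2(\F_2)$. The key point is that this representation is unramified away from $\{2,\infty\}$: by the criterion of N\'eron--Ogg--Shafarevich, $E[2]$ is unramified at every finite place $v$ of good reduction, i.e.\ at every $v\nmid 2$; and since $\F_2$ has characteristic $2$, reduction mod $2$ of the $\ell$-adic representations poses no obstruction for $\ell$ odd. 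Archimedean places are allowed in $(\dag)$, so they cause no issue. Thus $L/K$ is a Galois extension unramified over every odd prime whose group embeds in $GL_2(\F_2)\subseteq GL_2(\F_{2^a})$.

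Next I would dispose of the possibility that $\rho$ is irreducible. Since $L/K$ is visibly solvable ($GL_2(\F_2)\cong S_3$ is solvable, so any subgroup is), Theorem B applies: for $d=6,5,3,2,-1,-2,-3,-5,-6$, the embedding $\rho$ must be reducible. One then argues, exactly as in the proof of Theorem B sketched in Section 3, that a solvable reducible subgroup of $GL_2(\F_{2^a})$ unramified away from $\{2,\infty\}$ over these $K$ is forced to be a $2$-group: the reducibility places a conjugate of the image inside the Borel subgroup, and the ray-class-group computations of Section 3 (Proposition 2 together with the MAGMA verification summarized in Table 1) show that both $G/G'$ and $G'/G''$ are $2$-groups, whence $G$ is a $2$-group. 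So $G$ is a $2$-subgroup of $GL_2(\F_2)\cong S_3$; the only such subgroups are the trivial group and $\Z/2\Z$.

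Finally I would translate the structure of $G$ back into a statement about $E$. If $G$ is trivial then $E[2]\subseteq E(K)$ and in particular $E$ has a $K$-rational $2$-torsion point. If $G\cong\Z/2\Z$, then $\rho(G)$ is generated by a transposition in $S_3$ acting on the three nonzero points of $E[2]$; a transposition fixes exactly one of the three points, so that fixed point is $K$-rational and nonzero, again giving a $K$-rational point of order $2$. In either case $E$ has a $K$-rational point of order $2$, which is the assertion.

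I do not anticipate a serious obstacle: the argument is essentially a packaging of Theorem B (and its proof) with the standard good-reduction criterion for torsion. The one point requiring a little care is verifying that the mod $2$ representation really is unramified away from $2$ --- that is, making sure that the passage from "$E$ has good reduction at $v$" to "$E[2]$ is unramified at $v$" is legitimate for $v\nmid 2$; this is exactly N\'eron--Ogg--Shafarevich and needs only the hypothesis $v\nmid 2$, so it is unproblematic. The other mildly subtle point is invoking not just the statement of Theorem B but the stronger fact, established in its proof, that $G$ itself is a $2$-group; I would be explicit that this is what is being used.
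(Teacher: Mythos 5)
Your argument is essentially identical to the paper's: both reduce to the fact, established in the proof of Theorem B, that the Galois group of $K(E[2])/K$ is a $2$-group, hence trivial or $\Z/2\Z$, and in either case some nonzero point of $E[2]$ is fixed. You simply make explicit two details the paper leaves implicit (the N\'eron--Ogg--Shafarevich justification of unramifiedness away from $2$, and the fixed-point count for an order-$2$ element of $GL_2(\F_2)$), and both are handled correctly.
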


   This extends results of Pinch[13] and Kida[7].

   An elliptic curve $E$ over $K$ is called $\textit{admissible}$ if the following conditions are 
   satisified:
   
   \begin{itemize}
   \item[(1)] $E$ has good reduction everywhere over $K$
   \item[(2)] $E$ has a $K$-rational point of order $2$
   \end{itemize}
    
   Comalada[4] showed that for $1<d<100$, there exists an admissible elliptic curve over $\Q(\sqrt{d})$ if and only if $d=6,7,14,22,38,41,65,77,86$.
   Setzer[17] showed that for $d<0$, there exists an admissible elliptic curve over $\Q(\sqrt{d})$ if and only if $d=65d_1$ where $d_1$ is a square
   modulo $5$ and modulo $13$ and $65$ is a square modulo $d_1$. Combining these two results with Proposition 4, we get
   
   \newtheorem*{ellcorollary}{Corollary 3}
   \begin{ellcorollary} 
   For $d=5,3,2,-1, -2, -3, -5, -6$, there is no elliptic curve with good reduction everywhere over $\Q(\sqrt{d})$.
   \end{ellcorollary}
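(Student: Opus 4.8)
The plan is to combine Proposition 4 with the two classification theorems of Comalada and Setzer quoted just above the statement. Suppose, for one of the values $d=5,3,2,-1,-2,-3,-5,-6$, that $E$ is an elliptic curve over $K=\Q(\sqrt d)$ with good reduction everywhere. In particular $E$ has good reduction away from $2$, so Proposition 4 applies and $E$ has a $K$-rational point of order $2$. Thus $E$ satisfies both conditions (1) and (2), i.e. $E$ is admissible. It therefore suffices to check that $\Q(\sqrt d)$ admits no admissible elliptic curve for each $d$ in our list.

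I would then split into the real and imaginary cases. For $d=5,3,2$ we have $1<d<100$, so Comalada's theorem says an admissible curve over $\Q(\sqrt d)$ exists only when $d\in\{6,7,14,22,38,41,65,77,86\}$; since none of $5,3,2$ lies in this set, no such $E$ exists. For $d=-1,-2,-3,-5,-6$ we invoke Setzer's theorem: for $d<0$, an admissible curve over $\Q(\sqrt d)$ exists only if $d=65d_1$ for some integer $d_1$ which is a square modulo $5$ and modulo $13$ with $65$ a square modulo $d_1$; in particular $65\mid d$, which is impossible when $1\le\abs{d}\le 6$. Hence again no admissible $E$ exists, and the corollary follows in all cases.

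There is no genuinely hard step here; the entire content sits in Proposition 4 (hence in Theorem B) together with the two cited classifications, and what remains is only bookkeeping. The one point worth flagging is why $d=6$ is deliberately absent from the list of the corollary: Comalada's list contains $6$, and in fact there is an elliptic curve with good reduction everywhere over $\Q(\sqrt 6)$, so the argument neither can nor should rule that case out. One should also note that "good reduction everywhere'' trivially supplies the hypothesis of Proposition 4 (good reduction away from $2$) as well as both conditions (1) and (2) in the definition of admissibility, so the reduction above is legitimate.
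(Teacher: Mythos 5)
Your argument is exactly the paper's own (one-line) proof: Proposition 4 upgrades "good reduction everywhere'' to "admissible,'' and the classifications of Comalada (real case) and Setzer (imaginary case) then exclude every $d$ in the list. The proposal is correct, and your remark on why $d=6$ must be omitted is consistent with the paper.
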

   
   Kagawa and Kida proved the nonexistence of elliptic curves with good reduction everywhere over many small quadratic fields , 
   including the ones listed in this corollary (see [6] and Kagawa's thesis).
   One may try to use our approach on the several other small ones quadratic fields not covered by their methods. 
   
   \section{References}
   
   \begin{small}
   \begin{itemize}
   \item[1.] S.Brueggeman ; The nonexistence of certain nonsolvable Galois extensions unramified outside $5$. {\it J. Number Theory} {\bf 75} (1999), 47--51
   \item[2.] S.Brueggeman ; The nonexistence of certain nonsolvable Galois extensions of number fields of small degree.  {\it Int. J. Number Theory}  {\bf 1}  (2005),  no. 1, 155--160
   \item[3.] K.Buzzard, F.Diamond, A.F.Jarvis ;  On Serre's conjecture for mod l Galois representations over totally real fields. preprint 
   \item[4.] S.Comalada ; Elliptic curves with trivial conduxtor over quadratic fields. {\it Pacific J.Math}, {\bf 144}, (1990), 237-258
   \item[5.] L.M.Figueiredo ; Serre's conjecture for imaginary quadratic fields. {\it Compositio Math.} {\bf 118} (1999), no. 1, 103--122
   \item[6.] T.Kagawa, M.Kida ; Nonexistence of elliptic curves with good reduction everywhere over real quadratic fields. {\it J. Number Theory} {\bf 66} (1997), 201--210
   \item[7.] M.Kida ; Reduction of elliptic curves over certain real quadratic number fields. {\it Math. Comp.}, {\bf 68}  (1999),  no. 228, 1679--1685
   \item[8.] J.Kl\"{u}ners, G.Malle; {\footnotesize http://www.math.uni-duesseldorf.de/$\sim$klueners/minimum/minimum.html}
   \item[9.] H.Moon ; Finiteness results on certain mod p Galois representations, {\it J. Number Theory} {\bf 84} (2000), 156--165 
   \item[10.] H.Moon, Y.Taguchi ; Refinement of Tate's discriminant bound and nonexistence theorems for mod $p$ Galois representations. {\it Doc.Math.}, Extra vol. (2003), 641--654
   \item[11.] H.Moon, Y.Taguchi ; The nonexistence of certain mod 2 Galois representations of some small quadratic fields, preprint, http://webbuild.knu.ac.kr/~hsmoon/bib/mod2qf.htm
   \item[12.] N.Nakagoshi ; The structure of the multiplicative group of residue classes modulo $\mathfrak{p}^{N+1}$.  {\it Nagoya Math. J.}  {\bf 73}  (1979), 41--60.
   \item[13.] R.G.E.Pinch; Elliptic curves with good reduction away from $2$. II. {\it Math. Proc. Cambridge Philos. Soc.}  {\bf 100}  (1986),  no. 3, 435--457. 
   \item[14.] G.Poitou (d'apr\`es A. M. Odlyzko); Minorations de discriminants. \textit{S\'em. Bourbaki}, 1975/76, Exp.479,  pp. 136--153. Lecture Notes in Math., Vol. \textbf{567}, Springer, Berlin, 1977.
   \item[15.] J.P.Serre ; Oeuvres. Vol.III, p.710, Springer-Verlag, Berlin, 1986 
   \item[16.] J.P.Serre ; Sur les representations modulaires de degre 2 de $Gal(\overline{\Q}/ \Q)$. {\it Duke Math.J.}, {\bf 54}, (1987)
   \item[17.] B.Setzer ; Elliptic curves over complex quadratic fields. {\it Pacific J. Math.}  {\bf 74}  (1978), no. 1, 235--250
   \item[18.] J.Tate ; The non-existence of certain Galois extensions of $\Q$ unramified outside $2$.  Arithmetic geometry (Tempe, AZ, 1993),  153--156, 
    Contemp. Math., 174, Amer. Math. Soc., Providence, RI, 1994  
   
   \vspace{.2 in}
   \textsc{department of mathematics, university of wisconsin, 480 Lincoln Dr Madison wi 53706}
   
   \textit{E-mail address:} sengun@math.wisc.edu

   \end{itemize}
   \end{small}

\end{document}